\newcommand{\as}{\ins{as}}
\newcommand{\bdry}[1]{\partial #1}
\newcommand{\bgset}[1]{\big\{#1\big\}}
\newcommand{\closure}[1]{\overline{#1}}
\newcommand{\dualp}[3][]{\left(#2,#3\right)_{#1}}
\newcommand{\hquad}{\hspace{0.08in}}
\newcommand{\incl}{\subset}
\newcommand{\ins}[1]{\hquad \text{#1} \hquad}
\newcommand{\isom}{\approx}
\newcommand{\norm}[2][]{\left\|#2\right\|_{#1}}
\renewcommand{\o}{\text{o}}
\newcommand{\PS}[1]{$(\text{PS})_{#1}$}
\newcommand{\QED}{\mbox{\qedhere}}
\newcommand{\restr}[2]{\left.#1\right|_{#2}}
\newcommand{\set}[1]{\left\{#1\right\}}
\newcommand{\F}{{\cal F}}
\newcommand{\M}{{\cal M}}
\newcommand{\R}{\mathbb R}
\newcommand{\RP}{\R \text{P}}
\newcommand{\Z}{\mathbb Z}
\DeclareMathOperator{\dvg}{div}
\newenvironment{enumroman}{\begin{enumerate}

}{\end{enumerate}}
\newtheorem{corollary}{Corollary}[section]
\newtheorem{lemma}[corollary]{Lemma}
\newtheorem{proposition}[corollary]{Proposition}
\newtheorem{theorem}[corollary]{Theorem}
\theoremstyle{definition}
\newtheorem{definition}[corollary]{Definition}
\numberwithin{equation}{section}
\title{\bf An existence result for a class of quasilinear elliptic eigenvalue problems in unbounded domains\thanks{{\em MSC2010:} Primary 35J66, Secondary 35J70, 35J20 \smallskip
\newline \indent\; {\em Key Words and Phrases:} nonlinear eigenvalue problems, unbounded domains, weighted $p$-Laplacian, Robin boundary conditions, nontrivial solutions, Morse theory, cohomological local splitting}}
\author{\bf Kanishka Perera\\
Department of Mathematical Sciences\\
Florida Institute of Technology\\
Melbourne, FL 32901, USA\\
\em kperera@fit.edu\\
[\bigskipamount]
\bf Patrizia Pucci\\
Dipartimento di Matematica e Informatica\\
Universit\`a degli Studi di Perugia\\
Via Vanvitelli 1, 06123 Perugia, Italy\\
\em pucci@dmi.unipg.it\\
[\bigskipamount]
\bf Csaba Varga\\
Faculty of Mathematics and Computer Science\\
Babe\c{s}--Bolyai University\\
400084 Cluj--Napoca, Romania\\
\em csvarga@cs.ubbcluj.ro}
\begin{document}

\maketitle

\begin{abstract}
We consider a nonlinear eigenvalue problem under Robin boundary conditions in a domain with (possibly noncompact) smooth boundary. The problem involves a weighted $p$-Laplacian operator and subcritical nonlinearities satisfying Ambrosetti-Rabinowitz type conditions. Using Morse theory and a cohomological local splitting as in Degiovanni et al.\! \cite{MR2661274}, we prove the existence of a nontrivial weak solution for all (real) values of the eigenvalue parameter. Our result is new even in the semilinear case $p = 2$ and complements some recent results obtained in Autuori et al.\! \cite{AuPuVa}.
\end{abstract}

\newpage

\section{Introduction}

Let $\Omega$ be a domain in $\R^N,\, N \ge 2$ with (possibly noncompact) smooth boundary $\bdry{\Omega}$ and consider the problem
\begin{equation} \label{1.1}
\left\{\begin{aligned}
&- \dvg \left(a(x)\, |\nabla u|^{p-2}\, \nabla u\right) = \lambda\, f(x)\, |u|^{p-2}\, u + g(x,u) && \text{in } \Omega\\[10pt]
&a(x)\, |\nabla u|^{p-2}\, {\partial_\nu u} + b(x)\, |u|^{p-2}\, u = h(x,u) && \text{on } \bdry{\Omega},
\end{aligned}\right.
\end{equation}
where $1 < p < N$ and ${\partial_\nu u}=\partial/\partial \nu$ is the exterior normal derivative on $\bdry{\Omega}$. We assume that \linebreak {\em $a \in C^1(\Omega) \cap L^\infty(\Omega)$, with $a \ge a_0$ for some constant $a_0 > 0$, that $b \in C(\bdry{\Omega})$ verifies
\[
\frac{1}{C}\ (1 + |x|)^{1-p} \le b(x) \le C\, (1 + |x|)^{1-p} \quad \mbox{for all} x \in \bdry{\Omega},
\]
and that $f$ is a measurable function on $\Omega$ satisfying
\[
0 < f(x) \le C\, w_1(x) \quad \text{for a.a. } x \in \Omega,
\]
where $w_1(x) = (1 + |x|)^{- \alpha_1},\, p < \alpha_1 < N$}. Here and in the sequel $C$ denotes a generic positive constant.

If $w$ is a weight on $\Omega$ (i.e., an a.e.\! positive measurable function), let $L^\sigma(\Omega,w)$, $\sigma \ge 1$, denote the weighted Lebesgue space with the norm
\[
\norm[\sigma,w]{u} = \left(\int_\Omega w(x)\, |u(x)|^\sigma\, dx\right)^{1/\sigma}.
\]
Similarly, if $\widetilde{w}$ is a weight on $\bdry{\Omega}$, $L^\sigma(\bdry{\Omega},\widetilde{w})$ denotes the weighted space with the norm
\[
\norm[\sigma,\widetilde{w},\bdry{\Omega}]{u} = \left(\int_{\bdry{\Omega}} \widetilde{w}(x)\, |u(x)|^\sigma\, dS\right)^{1/\sigma}.
\]
We assume that
{\em\begin{itemize}
\item[$(g_1)$] $g$ is a Carath\'{e}odory function on $\closure{\Omega} \times \R$ satisfying
    \[
    |g(x,s)| \le g_0(x)\, |s|^{p-1} + g_1(x)\, |s|^{r-1} \quad \text{for a.a. } x \in \Omega \text{ and all } s \in \R,
    \]
    where $p < r < p^\ast := Np/(N - p)$, $g_0$ and $g_1$ are measurable functions on $\Omega$ satisfying
    \[
    0 < g_0(x) \le C\, w_2(x), \quad 0 \le g_1(x) \le C\, g_0(x) \quad \text{for a.a. } x \in \Omega,
    \]
    $w_2(x) = (1 + |x|)^{- \alpha_2},\, N - (N - p)\, r/p < \alpha_2 < N$, $g_0/w_2 \in L^{r/(r-p)}(\Omega,w_2)$, and \linebreak $g_0 \in L^{\widetilde{r}/(\widetilde{r}-p)}(\Omega,w_2)$, $r < \widetilde{r} < \min \set{pr,p^\ast}$;
\item[$(g_2)$] $G(x,s) := \int_0^s g(x,t)\, dt$ satisfies
    \[
    \lim_{s \to 0}\, \frac{G(x,s)}{g_0(x)\, |s|^p} = 0, \quad \lim_{|s| \to \infty}\, \frac{G(x,s)}{g_0(x)\, |s|^p} = \infty \quad \text{uniformly for a.a. } x \in \Omega;
    \]
\item[$(g_3)$] there exists a $\mu > p$ such that for a.a. $x \in \Omega$ and all $s \in \R$
    \[
    \mu\, G(x,s) \le s\, g(x,s) + \gamma_0(x) + \gamma_1(x)\, |s|^p,
    \]
    where $\gamma_0 \in L^1(\Omega)$ and $\gamma_1$ is a measurable function on $\Omega$ satisfying
    \[
    0 < \gamma_1(x) \le C\, w_1(x) \quad \text{for a.a. } x \in \Omega;
    \]
\item[$(h_1)$] $h$ is a Carath\'{e}odory function on $\bdry{\Omega} \times \R$ satisfying
    \[
    |h(x,s)| \le h_0(x)\, |s|^{p-1} + h_1(x)\, |s|^{q-1} \quad \text{for a.a. } x \in \bdry{\Omega} \text{ and all } s \in \R,
    \]
    where $p < q < \bar{p} := (N - 1)\, p/(N - p)$, $h_0$ and $h_1$ are measurable functions on $\bdry{\Omega}$ satisfying
    \[
    0 < h_0(x) \le C\, w_3(x), \quad 0 \le h_1(x) \le C\, h_0(x) \quad \text{for a.a. } x \in \bdry{\Omega},
    \]
    $w_3(x) = (1 + |x|)^{- \alpha_3},\, N - 1 - (N - p)\, q/p < \alpha_3 < N$, $h_0/w_3 \in L^{q/(q-p)}(\bdry{\Omega},w_3)$, and $h_0 \in L^{\widetilde{q}/(\widetilde{q}-p)}(\bdry{\Omega},w_3),\, q < \widetilde{q} < \min \set{qr,\bar{p}}$;
\item[$(h_2)$] $H(x,s) := \int_0^s h(x,t)\, dt$ satisfies
    \[
    \lim_{s \to 0}\, \frac{H(x,s)}{h_0(x)\, |s|^p} = 0, \quad \lim_{|s| \to \infty}\, \frac{H(x,s)}{h_0(x)\, |s|^p} = \infty \quad \text{uniformly for a.a. } x \in \bdry{\Omega};
    \]
\item[$(h_3)$] there exists a $\widetilde{\mu} > p$ such that for a.a. $x \in \bdry{\Omega}$ and all $s \in \R$
    \[
    \widetilde{\mu}\, H(x,s) \le s\, h(x,s) + \tau_0(x) + \tau_1(x)\, |s|^p,
    \]
    where $\tau_0 \in L^1(\bdry{\Omega})$, $\tau_1$ is a measurable functions on $\bdry{\Omega}$ satisfying
    \[
    0 < \tau_1(x) \le C\, w_4(x) \quad \text{for a.a. } x \in \bdry{\Omega},
    \]
    and $w_4(x) = (1 + |x|)^{- \alpha_4},\, p - 1 < \alpha_4 < N$.
\end{itemize}}

Let $C^\infty_\delta(\Omega)$ be the space of $C^\infty_0(\R^N)$-functions restricted to $\Omega$, and define the weighted Sobolev space $E$ to be the completion of $C^\infty_\delta(\Omega)$ with respect to the norm
\[
\norm[E]{u} = \left(\int_\Omega |\nabla u(x)|^p + \frac{|u(x)|^p}{(1 + |x|)^p}\ dx\right)^{1/p}.
\]
By Pfl{\"u}ger \cite[Lemma 2]{MR1615337},
\[
\norm{u} = \left(\int_\Omega a(x)\, |\nabla u(x)|^p\, dx + \int_{\bdry{\Omega}} b(x)\, |u(x)|^p\, dS\right)^{1/p}
\]
is an equivalent norm on $E$. A {\em weak solution} of problem \eqref{1.1} is a function $u \in E$ satisfying
\begin{multline*}
\int_\Omega a(x)\, |\nabla u|^{p-2}\, \nabla u \cdot \nabla \varphi\, dx + \int_{\bdry{\Omega}} b(x)\, |u|^{p-2}\, u\, \varphi\, dS = \int_\Omega \left(\lambda\, f(x)\, |u|^{p-2}\, u + g(x,u)\right) \varphi\, dx\\[10pt]
+ \int_{\bdry{\Omega}} h(x,u)\, \varphi\, dS
\end{multline*}
for all $\varphi \in E$.

Since $g(x,0) = 0$ for a.a.\! $x \in \Omega$ by $(g_1)$ and $h(x,0) = 0$ for a.a.\! $x \in \bdry{\Omega}$ by $(h_1)$, \eqref{1.1} has the trivial solution $u = 0$. The following existence result was recently obtained by Autuori et al.\! \cite[Theorem 4.3]{AuPuVa}.

\begin{theorem} \label{Theorem 1.1} In addition to $(g_1)$--$(g_3)$ and $(h_1)$--$(h_3)$,
assume that
\[
G(x,s) \ge 0 \text{ for a.a. } x \in \Omega \text{ and all } s \in \R
\text{ and }
H(x,s) \ge 0 \text{ for a.a. } x \in \bdry{\Omega} \text{ and all } s \in \R.
\]
Then problem \eqref{1.1} has a nontrivial solution $u \in E$ for all $\lambda \in \R$.
\end{theorem}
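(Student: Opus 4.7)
The plan is to look for critical points of the energy functional
\[
\Phi_\lambda(u) = \frac{1}{p}\,\norm{u}^p - \frac{\lambda}{p}\int_\Omega f(x)\,|u|^p\,dx - \int_\Omega G(x,u)\,dx - \int_{\bdry{\Omega}} H(x,u)\,dS,
\]
which coincide with the weak solutions of \eqref{1.1}, and then to apply the abstract critical point theorem based on cohomological local splitting of Degiovanni et al.\! \cite{MR2661274}.

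First I would check that $\Phi_\lambda \in C^1(E,\R)$; this rests on the compact embeddings $E \hookrightarrow L^p(\Omega,f)$ and $E \hookrightarrow L^r(\Omega,g_0)$ and on the compact trace $E \hookrightarrow L^q(\bdry{\Omega},h_0)$ supplied by the weighted Sobolev framework of \cite{AuPuVa} under the stated weight hypotheses. Next I would verify the Palais--Smale condition $(\text{PS})_c$ at every level $c \in \R$: boundedness of a sequence $(u_n)$ with $\Phi_\lambda(u_n) \to c$ and $\Phi_\lambda'(u_n) \to 0$ follows by combining $\mu\,\Phi_\lambda(u_n) - \langle \Phi_\lambda'(u_n), u_n\rangle$ with $(g_3)$, and analogously $\widetilde{\mu}$ with $(h_3)$ for the boundary term, the parts $\gamma_0,\tau_0 \in L^1$ being absorbed while $\gamma_1,\tau_1$ are controlled by their weights. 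A strongly convergent subsequence is then extracted via the compact embeddings above together with the $(S_+)$ property of the weighted $p$-Laplacian on $E$.

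The heart of the argument is to construct a cohomological local splitting of $\Phi_\lambda$ at $0$ valid for every $\lambda \in \R$. I would first set up the weighted Robin eigenvalue problem
\[
-\dvg(a\,|\nabla u|^{p-2}\,\nabla u) = \mu\,f(x)\,|u|^{p-2}u \ins{in} \Omega, \quad a\,|\nabla u|^{p-2}\,\partial_\nu u + b\,|u|^{p-2}u = 0 \ins{on} \bdry{\Omega},
\]
and build a nondecreasing unbounded sequence of variational eigenvalues $\mu_1 \le \mu_2 \le \ldots$ through the Fadell--Rabinowitz cohomological index, as in \cite{MR2661274}. Given $\lambda \in \R$, pick $k$ with $\mu_k \le \lambda < \mu_{k+1}$, or $\lambda < \mu_1$ if no such $k$ exists. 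By $(g_2), (h_2)$ and the positivity hypothesis on $G$ and $H$,
\[
\Phi_\lambda(u) = \frac{1}{p}\,\norm{u}^p - \frac{\lambda}{p}\int_\Omega f\,|u|^p\,dx + \o(\norm{u}^p) \as u \to 0,
\]
the $\o(\norm{u}^p)$ correction being nonpositive. The index-theoretic min-max characterisation of $\mu_k$ then supplies symmetric subsets $A, B \subset E$ with the appropriate cohomological indices on which $\Phi_\lambda \le 0$ near $0$ and $\Phi_\lambda \ge \delta > 0$ in a punctured neighbourhood of $0$, respectively; this is precisely the cohomological local splitting in the sense of \cite{MR2661274}.

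Finally, for any fixed $u \in C^\infty_\delta(\Omega) \setminus \set{0}$ the second limit in $(g_2)$ together with Fatou's lemma gives $\Phi_\lambda(tu)/t^p \to -\infty$ as $t \to \infty$, so $\Phi_\lambda$ is unbounded below. Since $\Phi_\lambda$ satisfies $(\text{PS})$, admits the cohomological local splitting at $0$, and is unbounded below, the abstract critical point theorem of \cite{MR2661274} then produces a nontrivial critical point, i.e.\! a nontrivial weak solution of \eqref{1.1}. The hardest step will be the local splitting at a resonant value $\lambda = \mu_k$, where the leading quadratic form degenerates along the $\mu_k$-eigenspace; it is precisely the sign assumption $G, H \ge 0$ together with the higher-order vanishing in $(g_2), (h_2)$ that allows one to absorb the nonlinear corrections into the splitting without losing cohomological dimension.
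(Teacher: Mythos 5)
Your overall strategy --- critical groups of $\Phi$ at $0$ via a cohomological local splitting built from the index-theoretic eigenvalues, combined with the \PS{} condition --- is exactly the route of the paper's proof of the more general Theorem \ref{Theorem 1.2}, of which Theorem \ref{Theorem 1.1} is the special case \ref{Theorem 1.2.i}--\ref{Theorem 1.2.ii}. Your local analysis at $0$ is essentially right: at a resonant value $\lambda=\lambda_k<\lambda_{k+1}$ the sign condition $G,H\ge0$ gives $\Phi\le0$ on all of $E_-$, while on $E_+$ only the strict gap $\lambda<\lambda_{k+1}$ and the $\o(\norm{u}^p)$ expansion \eqref{3.1} are needed (you do not get, and do not need, a positive lower bound $\delta$ there). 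For $\lambda<\lambda_1$ there is no splitting in dimension $k\ge1$; in that case $0$ is a local minimizer and $C^0(\Phi,0)\ne0$, which serves the same purpose.

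The genuine gap is in your final step. Knowing that $\Phi(tu)\to-\infty$ along every ray is not a sufficient hypothesis for the abstract result of \cite{MR2661274}: with $0$ as the only critical point and \PS{} in force, the second deformation lemma gives $C^\kappa(\Phi,0)\isom\mathcal H^\kappa(E,\Phi^a)$, and to contradict $C^k(\Phi,0)\ne0$ one must actually prove $\mathcal H^\kappa(E,\Phi^a)=0$ for some $a<0$; unboundedness below does not by itself control the topology of the sublevel set $\Phi^a$. The paper's Lemma \ref{Lemma 3.3} supplies this by showing, in addition to \eqref{3.3}, that $\dualp{\Phi'(u)}{u}<0$ for all $u\in\Phi^a$ (inequality \eqref{3.4}), which makes the entrance time $T(u)=\inf\bgset{t\ge1:\Phi(tu)\le a}$ well defined and continuous and yields a radial deformation retraction of $E\setminus\set{0}$ onto $\Phi^a$, whence $\mathcal H^\kappa(E,\Phi^a)\isom\mathcal H^\kappa(E,E\setminus\set{0})=0$. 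Establishing \eqref{3.4} is the hardest part of the whole proof: it rests on the limsup estimate \eqref{3.5} for $\bigl(\dualp{\Phi'(u)}{u}-p\Phi(u)\bigr)/\norm{u}^p$, proved by a contradiction and compactness argument that uses $(g_3)$, $(h_3)$, the weighted compact embeddings, and Fatou's lemma. Your sketch invokes $(g_3)$, $(h_3)$ only to bound Palais--Smale sequences and omits this second, essential use; without it the argument does not close.
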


The proof of this theorem was based on the sequence of eigenvalues $\lambda_k \nearrow \infty$ of the associated Robin boundary eigenvalue problem
\begin{equation} \label{1.2}
\left\{\begin{aligned}
&- \dvg \left(a(x)\, |\nabla u|^{p-2}\, \nabla u\right) = \lambda\, f(x)\, |u|^{p-2}\, u && \text{in } \Omega,\\[10pt]
&a(x)\, |\nabla u|^{p-2}\, \frac{\partial u}{\partial \nu} + b(x)\, |u|^{p-2}\, u = 0 && \text{on } \bdry{\Omega},
\end{aligned}\right.
\end{equation}
defined using the $\Z_2$-cohomological index. In the present paper we obtain the following extension.

\begin{theorem} \label{Theorem 1.2}
Assume $(g_1)$--$(g_3)$ and $(h_1)$--$(h_3)$. Then problem \eqref{1.1} has a nontrivial solution $u \in E$ in each of the following cases:
\begin{enumroman}
\item \label{Theorem 1.2.i} $\lambda \not\in \bgset{\lambda_k : k \ge 1}$;
\item \label{Theorem 1.2.ii} $\lambda \in \bgset{\lambda_k : k \ge 1}$, $G(x,s) \ge 0$ for a.a. $x \in \Omega$ and all $s \in \R$, and $H(x,s) \ge 0$ for a.a. $x \in \bdry{\Omega}$ and all $s \in \R$;
\item \label{Theorem 1.2.iii} $\lambda \in \bgset{\lambda_k : k \ge 1}$, $G(x,s) \le 0$ for a.a. $x \in \Omega$ and all $s \in \R$, and $H(x,s) \le 0$ for a.a. $x \in \bdry{\Omega}$ and all $s \in \R$.
\end{enumroman}
\end{theorem}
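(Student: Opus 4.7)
The plan is to apply variational methods to the $C^1$ energy functional
\[
I_\lambda(u) = \frac{1}{p}\,\norm{u}^p - \frac{\lambda}{p}\int_\Omega f(x)\,|u|^p\,dx - \int_\Omega G(x,u)\,dx - \int_{\bdry{\Omega}} H(x,u)\,dS,
\]
whose critical points in $E$ are precisely the weak solutions of \eqref{1.1}. The strategy is to combine a Palais--Smale verification with a cohomological local splitting at $u=0$ in the sense of Degiovanni et al.\ \cite{MR2661274}, and to play it off against trivial Morse data at infinity.

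First I would verify that $I_\lambda$ satisfies \PS{c} for every $c\in\R$. Boundedness of Palais--Smale sequences is the standard Ambrosetti--Rabinowitz argument: testing $I'_\lambda(u_n)$ against $u_n$ and subtracting $\mu\,I_\lambda(u_n)$, conditions $(g_3)$ and $(h_3)$ leave $\norm{u_n}^p$ controlled by $\norm{u_n}$ modulo lower-order weighted terms that the $L^1$-bounds on $\gamma_0,\tau_0$ and the $w_1$, $w_4$-bounds on $\gamma_1,\tau_1$ absorb through the equivalent norm. Strong convergence then follows from the compactness of the embeddings of $E$ into the weighted spaces $L^r(\Omega,g_0)$ and $L^q(\bdry{\Omega},h_0)$ proved in Autuori--Pucci--Varga \cite{AuPuVa}, which makes the nonlinear part of $I'_\lambda$ compact; the usual $S_+$-type property of the weighted $p$-Laplacian principal part then upgrades this to convergence in $E$.

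Next I would analyse the origin. By $(g_2)$ and $(h_2)$ the nonlinearities satisfy $G(x,s) = \o(g_0(x)|s|^p)$ and $H(x,s) = \o(h_0(x)|s|^p)$ as $s\to 0$, so near $0$ the functional $I_\lambda$ differs from the $p$-homogeneous part
\[
I_\lambda^0(u) = \frac{1}{p}\norm{u}^p - \frac{\lambda}{p}\int_\Omega f(x)\,|u|^p\,dx
\]
by a higher-order perturbation. From the $\Z_2$-cohomological-index construction of the sequence $\seq{\lambda_k}$ in \cite{AuPuVa}, for each $k$ one obtains symmetric closed subsets $A_k, B_k \subset E \setminus \set{0}$ with cohomological index $i(A_k) = k$ and $i(E \setminus B_k) \le k-1$ on which $I_\lambda^0$ has opposite signs. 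In case \ref{Theorem 1.2.i}, with $\lambda_k < \lambda < \lambda_{k+1}$, the strict separation of signs survives the higher-order perturbation and yields directly a cohomological local splitting of dimension $k$ at $0$. In cases \ref{Theorem 1.2.ii} and \ref{Theorem 1.2.iii}, where $\lambda = \lambda_k$ makes $I_\lambda^0$ degenerate along an index-$k$ pseudo-eigenspace, the sign assumption on $G$ and $H$ respectively pushes $I_\lambda$ below $I_\lambda^0$ on $A_k$ or above $I_\lambda^0$ on $B_k$, restoring the cohomological splitting in the framework of \cite{MR2661274}.

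Finally, $(g_3)$ and $(h_3)$ together with $(g_2)$, $(h_2)$ imply in the standard way that $I_\lambda$ is unbounded below along suitably chosen rays, so combined with \PS{} the critical groups at infinity $C^q(I_\lambda,\infty)$ vanish for every $q$. The cohomological local splitting on the other hand forces some $C^q(I_\lambda,0)$ to be nontrivial, and the Morse inequalities then produce a critical point $u \ne 0$. The hard part will be the eigenvalue cases \ref{Theorem 1.2.ii}--\ref{Theorem 1.2.iii}: at $\lambda = \lambda_k$ the $p$-form $I_\lambda^0$ vanishes on the index-$k$ piece, so one must quantitatively combine the higher-order information from $(g_2),(h_2)$ with the sign hypotheses on $G,H$ to verify the two one-sided inequalities required by the local splitting, while controlling every estimate through the weighted norms dictated by the noncompact geometry of $\Omega$ and $\bdry{\Omega}$.
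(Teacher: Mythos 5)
Your overall architecture coincides with the paper's: weak solutions as critical points of the energy functional, the Palais--Smale condition from the Ambrosetti--Rabinowitz hypotheses and the compact weighted embeddings of \cite{AuPuVa}, a cohomological local splitting at the origin built from the index-defined eigenvalue sequence, and a contradiction with trivial topology at infinity. However, the step ``$I_\lambda$ is unbounded below along suitably chosen rays, so combined with \PS{} the critical groups at infinity $C^q(I_\lambda,\infty)$ vanish for every $q$'' is a genuine gap, not a routine deduction. Unboundedness along rays plus \PS{} is precisely mountain-pass geometry, in which $C^1(\cdot,\infty)$ is typically \emph{nontrivial}; ray-unboundedness alone does not even guarantee that the sublevel set $\Phi^a$ meets each ray in a half-line. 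What the paper actually proves (Lemma \ref{Lemma 3.3}) is that there exists $a<0$ with $\dualp{\Phi'(u)}{u}<0$ for all $u\in\Phi^a$; this derivative estimate forces $t\mapsto\Phi(tu)$ to cross the level $a$ exactly once on each ray, so that $E\setminus\set{0}$ radially deformation retracts onto $\Phi^a$ and $\mathcal H^\kappa(E,\Phi^a)\isom\mathcal H^\kappa(E,E\setminus\set{0})=0$. Establishing that estimate is the technical core of the whole proof: one shows $\limsup\,\bigl(\dualp{\Phi'(u)}{u}-p\,\Phi(u)\bigr)/\norm{u}^p<0$ on sublevel sets by a contradiction argument combining $(g_3)$, $(h_3)$, the compact embeddings and Fatou's lemma. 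Your sketch needs this (or an equivalent) before it can conclude anything about the topology at infinity.

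Two further points on the splitting at the origin. First, case \ref{Theorem 1.2.i} also contains $\lambda<\lambda_1$, where no splitting of dimension $k\ge1$ exists; the paper treats this separately by showing $0$ is a local minimizer (via $\norm{u}^p\ge\lambda_1\norm[p,f]{u}^p$ and the expansion \eqref{3.1}), so that $C^0(\Phi,0)\ne0$, and the same separate treatment is needed for $\lambda=\lambda_1$ with $G,H\le0$ in case \ref{Theorem 1.2.iii}. Second, the cones can be taken explicitly as $E_-=\bgset{u : \norm{u}^p\le\lambda_k\norm[p,f]{u}^p}$ and $E_+=\bgset{u : \norm{u}^p\ge\lambda_{k+1}\norm[p,f]{u}^p}$, and the definition of cohomological local splitting requires $i(E_-\setminus\set{0})=i(E\setminus E_+)=k$, which is exactly \eqref{2.1}; your condition $i(E\setminus B_k)\le k-1$ is not the one needed here. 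With these repairs your outline reproduces the paper's proof.
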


In \ref{Theorem 1.2.i} it is only assumed that $\lambda$ is not an eigenvalue from the particular sequence $(\lambda_k)_k$, so we have the following corollary.

\begin{corollary}
Assume $(g_1)$--$(g_3)$ and $(h_1)$--$(h_3)$. Then problem \eqref{1.1} has a nontrivial solution $u \in E$ for a.a.\! $\lambda \in \R$.
\end{corollary}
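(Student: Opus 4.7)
My plan is to reduce the corollary to Theorem~\ref{Theorem 1.2}\ref{Theorem 1.2.i}. First I would observe that the sequence $(\lambda_k)_{k \ge 1}$ is indexed by the positive integers, so the set $\bgset{\lambda_k : k \ge 1}$ is at most countable; as such, it is a null set in $\R$ with respect to one-dimensional Lebesgue measure, and its complement has full measure. Equivalently, for a.a. $\lambda \in \R$ one has $\lambda \notin \bgset{\lambda_k : k \ge 1}$.

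Having made this observation, I would simply apply case \ref{Theorem 1.2.i} of Theorem~\ref{Theorem 1.2}, which has already been established under $(g_1)$--$(g_3)$ and $(h_1)$--$(h_3)$: for every such $\lambda$ in the complement of the eigenvalue sequence, the theorem produces a nontrivial weak solution $u \in E$ of \eqref{1.1}, with no further sign assumption on $G$ or $H$ needed. Combining the two steps yields a nontrivial solution for almost every $\lambda \in \R$, which is the statement of the corollary. There is no substantive obstacle here: the entire content of the corollary is the combination of the already-proved case \ref{Theorem 1.2.i} with the elementary fact that a countable subset of the real line is Lebesgue-negligible.
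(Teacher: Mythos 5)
Your proposal is correct and matches the paper's intended argument exactly: the paper introduces the corollary with the remark that case $(i)$ of Theorem~\ref{Theorem 1.2} only excludes the countable set $\bgset{\lambda_k : k \ge 1}$, which is Lebesgue-null, so the conclusion holds for a.a.\! $\lambda \in \R$.
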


Weak solutions of problem \eqref{1.1} coincide with critical points of the $C^1$-functional
\begin{equation} \label{1.3}
\Phi(u) = \frac{1}{p}\, \norm{u}^p - \frac{\lambda}{p}\, \norm[p,f]{u}^p - \int_\Omega G(x,u)\, dx - \int_{\bdry{\Omega}} H(x,u)\, dS, \quad u \in E.
\end{equation}
Theorem~\ref{Theorem 1.1} was proved in Autuori et al.\! \cite{AuPuVa} using a linking argument as in Degiovanni and Lancelotti \cite{MR2371112}.
The proof of Theorem~\ref{Theorem 1.2} in this paper is based on the Morse theory as in Perera \cite{MR1998432} and Degiovanni et al.\! \cite{MR2661274}.

\section{Preliminaries}

Let $I(u) = \norm{u}^p$, $J(u) = \norm[p,f]{u}^p$, $u \in E$. Then the eigenfunctions of the problem \eqref{1.2} on the manifold
\[
\M = \bgset{u \in E : J(u) = 1}
\]
and the corresponding eigenvalues coincide with the critical points and the critical values, respectively, of the constrained functional $\widetilde{I} := \restr{I}{\M}$. It was shown in Autuori et al.\! \cite[Proposition 3.4]{AuPuVa} that $\widetilde{I}$ satisfies the \PS{} condition, so we can define an increasing and unbounded sequence of eigenvalues by a minimax scheme. The standard scheme based on the Krasnosel$'$ski\u\i's genus does not provide sufficient Morse theoretical information to prove Theorem \ref{Theorem 1.2}, so we use the cohomological index as in \cite{AuPuVa}. Eigenvalues of the $p$-Laplacian based on a cohomological index were first introduced in Perera \cite{MR1998432} for bounded domains (see also Perera and Szulkin \cite{MR2153141}).

Let us recall the definition of the $\Z_2$-cohomological index of Fadell and Rabinowitz \cite{MR57:17677}. For a symmetric subset $M$ of $E \setminus \set{0}$, let $\overline{M} = M/\Z_2$ be the quotient space of $M$ with each $u$ and $-u$ identified, let $f : \overline{M} \to \RP^\infty$ be the classifying map of $\overline{M}$, and let $f^\ast : \mathcal H^\ast(\RP^\infty) \to \mathcal H^\ast(\overline{M})$ be the induced homomorphism of the Alexander-Spanier cohomology rings. Then the cohomological index of $M$ is defined by
\[
i(M) = \begin{cases}
\sup\, \bgset{m \ge 1 : f^\ast(\omega^{m-1}) \ne 0}, & M \ne \emptyset\\[5pt]
0, & M = \emptyset,
\end{cases}
\]
where $\omega \in \mathcal H^1(\RP^\infty)$ is the generator of the polynomial ring $\mathcal H^\ast(\RP^\infty) = \Z_2[\omega]$. For example, the classifying map of the unit sphere $S^{m-1}$ in $\R^m$, $m \ge 1$, is the inclusion $\RP^{m-1} \incl \RP^\infty$, which induces isomorphisms on $\mathcal H^\kappa$ for $\kappa \le m-1$, so $i(S^{m-1}) = m$.

Let $\F$ denote the class of symmetric subsets of $\M$ and set
\[
\lambda_k := \inf_{\substack{M \in \F\\[1pt]
i(M) \ge k}}\, \sup_{u \in M}\, \widetilde{I}(u), \quad k \ge 1.
\]
Then $(\lambda_k)_k$ is a sequence of eigenvalues of \eqref{1.2}, $\lambda_k \nearrow \infty$, and if $\lambda_k < \lambda_{k+1}$, then
\begin{equation} \label{2.1}
i\big(\{u \in E : I(u) \le \lambda_k\, J(u)\} \setminus \{0\}\big) = i\big(E \setminus \{u \in E : I(u) \ge \lambda_{k+1}\, J(u)\}\big) = k
\end{equation}
(see, e.g., Perera et al.\! \cite[Propositions 3.52 and 3.53]{MR2640827}).

Recall that the cohomological critical groups at $0$ of the functional $\Phi$, defined in \eqref{1.3},  are given by
\begin{equation} \label{2.2}
C^\kappa(\Phi,0) = \mathcal H^\kappa(\Phi^0 \cap U,\Phi^0 \cap U \setminus \set{0}), \quad \kappa \ge 0,
\end{equation}
where $\Phi^0 = \bgset{u \in E : \Phi(u) \le 0}$, $U$ is any neighborhood of $0$, and $\mathcal H$ denotes Alexander-Spanier cohomology with $\Z_2$-coefficients. They are independent of $U$ by the excision property.

In the absence of a direct sum decomposition, the main technical tool we use to get an estimate of the critical groups at zero is the notion of a cohomological local splitting
introduced in Perera et al.\! \cite{MR2640827}, which is a variant of the homological local linking of Perera \cite{MR1700283} (see also Li and Willem \cite{MR96a:58045}). The following slightly different form of this notion was given in Degiovanni et al.\! \cite{MR2661274}.

\begin{definition}
We say that $\Phi \in C^1(E,\R)$ has a {\em cohomological local splitting near $0$ in dimension $k \ge 1$} if there are symmetric cones $E_\pm \subset E$ with $E_+ \cap E_- = \set{0}$ and $\rho > 0$ such that
\begin{equation} \label{2.3}
i(E_- \setminus \{0\}) = i(E \setminus E_+) = k
\end{equation}
and
\begin{equation} \label{2.4}
\Phi(u) \le \Phi(0) \quad \mbox{for all }u \in B_\rho \cap E_-, \qquad \Phi(u) \ge \Phi(0) \quad \mbox{for all }u \in B_\rho \cap E_+,
\end{equation}
where $B_\rho = \bgset{u \in E : \norm{u} \le \rho}$.
\end{definition}

\begin{proposition}[{\cite[Proposition 2.1]{MR2661274}}] \label{Proposition 2.4}
If $\Phi \in C^1(E,\R)$ has a cohomological local splitting near $0$ in dimension $k$ and $0$ is an isolated critical point of $\Phi$, then $C^k(\Phi,0) \ne 0$.
\end{proposition}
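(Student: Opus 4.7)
The plan is to verify $C^k(\Phi, 0) = \mathcal{H}^k(A, A_0) \ne 0$, where $A = \Phi^0 \cap B_\rho$ and $A_0 = A \setminus \set{0}$. We may assume $\Phi(0) = 0$, and by isolatedness of $0$ we shrink $\rho$ if necessary so that $0$ is the only critical point of $\Phi$ in $\overline{B_\rho}$ while \eqref{2.4} continues to hold; this is the only place where the isolatedness hypothesis will be used.

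The core of the argument is the index identity $i(A_0) = k$. For the lower bound $i(A_0) \ge k$, the inclusion $(E_- \cap B_\rho) \setminus \set{0} \subset A_0$ follows directly from the first half of \eqref{2.4}. Radial scaling $u \mapsto \rho\, u / \norm{u}$ is a $\Z_2$-equivariant deformation retraction of $(E_- \cap B_\rho) \setminus \set{0}$ onto a set equivariantly homotopy equivalent to $E_- \setminus \set{0}$, so by \eqref{2.3} combined with the monotonicity and homotopy invariance of the Fadell--Rabinowitz index, $i(A_0) \ge i(E_- \setminus \set{0}) = k$. For the matching upper bound, I would construct a $\Z_2$-equivariant continuous map $\eta \colon A_0 \to E \setminus E_+$. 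Points of $A_0 \setminus E_+$ are left fixed; for $u \in A_0 \cap E_+$, the two halves of \eqref{2.4} force $\Phi(u) = 0$, and since $u$ cannot be critical, the negative flow $\sigma_t$ of a $\Z_2$-equivariant pseudogradient moves $u$ into $\set{\Phi < 0}$, which by the second half of \eqref{2.4} is disjoint from $E_+ \cap B_\rho$. A standard odd cut-off $\tau \colon A_0 \to [0, \infty)$ vanishing on $A_0 \setminus E_+$ assembles $u \mapsto \sigma_{\tau(u)}(u)$ into the required continuous $\eta$, whence monotonicity yields $i(A_0) \le i(E \setminus E_+) = k$.

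The conclusion $C^k(\Phi, 0) \ne 0$ then follows from standard Morse-theoretic machinery. Using the same negative pseudogradient flow, cut off to keep trajectories inside $B_\rho$, and the fact that $0$ is the only critical point in $\overline{B_\rho}$, one equivariantly deforms $A$ onto $\set{0}$, so $A$ is contractible and the long exact sequence of the pair $(A, A_0)$ gives an isomorphism
\[
\widetilde{\mathcal{H}}^{k-1}(A_0) \isom \mathcal{H}^k(A, A_0) = C^k(\Phi, 0).
\]
The identity $i(A_0) = k$ then transfers to the non-vanishing of $\widetilde{\mathcal{H}}^{k-1}(A_0)$ through the classifying map of the free $\Z_2$-action on $A_0$, completing the proof.

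The main obstacle is the construction of the equivariant map $\eta$ in the upper-bound step: one has to arrange for the pseudogradient, the cut-off $\tau$, and the flow to be simultaneously $\Z_2$-equivariant, continuous on all of $A_0$ (including at points approaching the boundary of $E_+$), and to keep trajectories inside $B_\rho$. All these points are handled routinely once isolatedness of $0$ supplies a nonvanishing equivariant pseudogradient on a punctured neighborhood, but they form the technical heart of the proposition and are precisely what is carried out in \cite[Proposition 2.1]{MR2661274}.
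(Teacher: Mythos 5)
First, a remark on the comparison itself: the paper does not prove Proposition \ref{Proposition 2.4} at all — it is quoted from \cite[Proposition 2.1]{MR2661274} — so your proposal must be measured against the argument in that reference rather than against anything in this text.

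Your strategy breaks down at its first substantive step. You treat $A_0 = (\Phi^0 \cap B_\rho)\setminus\set{0}$ as a space carrying a free $\Z_2$-action: you compute its Fadell--Rabinowitz index, invoke ``the classifying map of the free $\Z_2$-action on $A_0$,'' and run a $\Z_2$-equivariant pseudogradient flow on it. But $\Phi$ is only assumed to be of class $C^1$; it is not assumed even, and in the application to problem \eqref{1.1} it is not even (neither $G(x,\cdot)$ nor $H(x,\cdot)$ is odd or even in $s$). Consequently $\Phi^0$ is not a symmetric set, $-A_0 \ne A_0$ in general, $i(A_0)$ is undefined, and there is no classifying map of $A_0$ and no equivariant pseudogradient to speak of. This is precisely the point of the cohomological local splitting device: the definition asks only the cones $E_\pm$ to be symmetric and imposes no symmetry on $\Phi$, and the proof in \cite{MR2661274} accordingly never forms the index of a sublevel set. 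It uses the index data \eqref{2.3} only through the classifying maps of the symmetric sets $E_-\setminus\set{0}$ and $E\setminus E_+$, and transfers the non-vanishing of the pulled-back powers of $\omega$ to the non-symmetric pair $\left(\Phi^0\cap B_\rho,\, A_0\right)$ via inclusion-induced homomorphisms and the long exact sequences of the relevant pairs. That transfer is the actual content of the proposition, and your outline replaces it with an ill-posed computation.

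Two further steps would need repair even if $\Phi$ happened to be even. For the map $\eta$: since $E_+$ is closed (in the application $E_+ = \bgset{u \in E : \norm{u}^p \ge \lambda_{k+1}\norm[p,f]{u}^p}$), a continuous cut-off $\tau$ vanishing on $A_0\setminus E_+$ must vanish on the closure of that set, which meets $A_0\cap E_+$; at such points $\eta(u)=u\in E_+$, so $\eta$ fails to map $A_0$ into $E\setminus E_+$. You would also need the flow to remain in $B_\rho$, since outside $B_\rho$ the inequality $\Phi\ge\Phi(0)$ on $E_+$ is unavailable. Finally, the contractibility of $A=\Phi^0\cap B_\rho$ is asserted via a flow ``cut off to keep trajectories inside $B_\rho$,'' but such a truncated flow can stall on $\partial B_\rho$ without reaching $0$; contractibility of $\Phi^0\cap B_\rho$ is not a consequence of $0$ being an isolated critical point, and the argument in \cite{MR2661274} does not rely on it.
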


\section{Proof of Theorem \ref{Theorem 1.2}}

First we prove some lemmas. By Autuori et al.\! \cite[Lemma 4.1]{AuPuVa},
\begin{equation} \label{3.1}
\Phi(u) = \frac{1}{p}\, \norm{u}^p - \frac{\lambda}{p}\, \norm[p,f]{u}^p + \o(\norm{u}^p) \as \norm{u} \to 0.
\end{equation}

\begin{lemma} \label{Lemma 3.1}
If $0$ is an isolated critical point of $\Phi$, then $C^\kappa(\Phi,0) \isom \delta_{\kappa0}\, \Z_2$ for all $\kappa\ge0$ in the following cases:
\begin{enumroman}
\item \label{Lemma 3.1.i} $\lambda < \lambda_1$;
\item \label{Lemma 3.1.ii} $\lambda = \lambda_1$, $G(x,s) \le 0$ for a.a. $x \in \Omega$ and all $s \in \R$, and $H(x,s) \le 0$ for a.a. $x \in \bdry{\Omega}$ and all $s \in \R$.
\end{enumroman}
\end{lemma}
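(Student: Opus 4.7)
My plan is to reduce the lemma to the observation that in each of the two cases $0$ is an isolated local minimum of $\Phi$ at level $\Phi(0) = 0$. Once this is proved, one picks a neighborhood $U$ of $0$ so small that $\Phi^0 \cap U = \set{0}$; from the definition \eqref{2.2} of the critical groups,
\[
C^\kappa(\Phi,0) = \mathcal H^\kappa(\set{0},\emptyset) \isom \delta_{\kappa 0}\, \Z_2,
\]
which is the desired conclusion. Before splitting into cases, I would record the following preliminary fact: since $i(M) \ge 1$ for every nonempty symmetric $M \in \F$, the minimax definition of $\lambda_1$ collapses to $\lambda_1 = \inf_{u \in \M} \widetilde{I}(u)$, so by the $p$-homogeneity of $I$ and $J$,
\[
\norm{u}^p \ge \lambda_1\, \norm[p,f]{u}^p \quad \text{for every } u \in E.
\]

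For case \ref{Lemma 3.1.i}, combining this Rayleigh estimate with $\lambda < \lambda_1$ produces a constant $c > 0$ with
\[
\frac{1}{p}\, \norm{u}^p - \frac{\lambda}{p}\, \norm[p,f]{u}^p \ge c\, \norm{u}^p \quad \text{for all } u \in E
\]
(the cases $\lambda \le 0$ and $0 < \lambda < \lambda_1$ are handled separately, the former being trivial). Plugging this into the asymptotic expansion \eqref{3.1} gives $\Phi(u) \ge (c + \o(1))\, \norm{u}^p$ as $\norm{u} \to 0$, so there exists $\rho > 0$ with $\Phi(u) > 0 = \Phi(0)$ whenever $0 < \norm{u} \le \rho$. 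In particular $\Phi^0 \cap B_\rho = \set{0}$.

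For case \ref{Lemma 3.1.ii} the expansion \eqref{3.1} is no longer sharp enough, because the quadratic-like part $\tfrac{1}{p}\norm{u}^p - \tfrac{\lambda_1}{p}\norm[p,f]{u}^p$ need not dominate the remainder $\o(\norm{u}^p)$ after the cancellation at $\lambda = \lambda_1$. Instead I would exploit the sign assumption globally: the Rayleigh inequality together with $G \le 0$ and $H \le 0$ yields
\[
\Phi(u) = \frac{1}{p}\, \norm{u}^p - \frac{\lambda_1}{p}\, \norm[p,f]{u}^p - \int_\Omega G(x,u)\, dx - \int_{\bdry{\Omega}} H(x,u)\, dS \ge 0 = \Phi(0)
\]
for every $u \in E$, so $0$ is a global minimum of $\Phi$. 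Every $u \in \Phi^0$ therefore satisfies $\Phi(u) = 0$ and is hence a (global) critical point of $\Phi$, so the isolation hypothesis on $0$ lets me choose $\rho > 0$ with $\Phi^0 \cap B_\rho = \set{0}$.

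The only mildly subtle point is this last step in case \ref{Lemma 3.1.ii}: unlike in case \ref{Lemma 3.1.i}, the local picture near $0$ is only semidefinite rather than positive definite, so the reduction $\Phi^0 \cap B_\rho = \set{0}$ cannot be obtained purely from an expansion and must instead combine the global sign control on $G$ and $H$ with the isolation of $0$ as a critical point to rule out nearby zero-level minima.
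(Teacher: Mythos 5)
Your proposal is correct and follows essentially the same route as the paper: both cases reduce to showing that $0$ is a local minimizer, using the Rayleigh-type inequality $\norm{u}^p \ge \lambda_1 \norm[p,f]{u}^p$ together with the expansion \eqref{3.1} in case \ref{Lemma 3.1.i}, and the sign conditions on $G$ and $H$ in case \ref{Lemma 3.1.ii}. The only cosmetic difference is that you compute the critical groups directly by arranging $\Phi^0 \cap U = \set{0}$, whereas the paper invokes the standard fact that an isolated local minimizer has $C^\kappa(\Phi,0) \isom \delta_{\kappa 0}\, \Z_2$.
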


\begin{proof}
We show that $\Phi$ has a local minimizer at $0$. Since $\lambda_1 = \inf_{u \in \M}\, \widetilde{I}(u)$,
\begin{equation} \label{3.2}
\norm{u}^p \ge \lambda_1 \norm[p,f]{u}^p \quad\text{for all }u \in E.
\end{equation}

\ref{Lemma 3.1.i} For sufficiently small $\rho > 0$,
\[
\Phi(u) \ge \left(1 - \frac{\max \set{\lambda,0}}{\lambda_1} + \o(1)\right) \frac{\norm{u}^p}{p} \ge 0 \quad \text{for all }u \in B_\rho
\]
by \eqref{3.1} and \eqref{3.2}.

\ref{Lemma 3.1.ii} We have
\[
\Phi(u) \ge - \int_\Omega G(x,u)\, dx - \int_{\bdry{\Omega}} H(x,u)\, dS \ge 0 \quad \text{for all }u \in E. \QED
\]
\end{proof}

\begin{lemma} \label{Lemma 3.2}
If $0$ is an isolated critical point of $\Phi$, then $C^k(\Phi,0) \ne 0$ in the following cases:
\begin{enumroman}
\item \label{Lemma 3.2.i} $\lambda_k < \lambda < \lambda_{k+1}$;
\item \label{Lemma 3.2.ii} $\lambda = \lambda_k < \lambda_{k+1}$, $G(x,s) \ge 0$ for a.a. $x \in \Omega$ and all $s \in \R$, and $H(x,s) \ge 0$ for a.a. $x \in \bdry{\Omega}$ and all $s \in \R$;
\item \label{Lemma 3.2.iii} $\lambda_k < \lambda_{k+1} = \lambda$, $G(x,s) \le 0$ for a.a. $x \in \Omega$ and all $s \in \R$, and $H(x,s) \le 0$ for a.a. $x \in \bdry{\Omega}$ and all $s \in \R$.
\end{enumroman}
\end{lemma}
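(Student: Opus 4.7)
The plan is to apply Proposition~\ref{Proposition 2.4} by exhibiting a cohomological local splitting of $\Phi$ in dimension $k$. Since $I$ and $J$ are even and $p$-homogeneous, the natural candidates
\[
E_- = \bgset{u \in E : I(u) \le \lambda_k\, J(u)}, \qquad E_+ = \bgset{u \in E : I(u) \ge \lambda_{k+1}\, J(u)}
\]
are symmetric cones in $E$. Whenever $\lambda_k < \lambda_{k+1}$ (which holds in all three cases), $u \in E_- \cap E_+ \setminus \set{0}$ would give $\lambda_k\, J(u) \ge \lambda_{k+1}\, J(u)$ with $J(u) > 0$ (since $f > 0$ a.e.), a contradiction, so $E_- \cap E_+ = \set{0}$. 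The index identity $i(E_- \setminus \set{0}) = i(E \setminus E_+) = k$ is then exactly \eqref{2.1}.

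The heart of the argument is to verify the splitting inequalities \eqref{2.4}. The key observation is that on each cone the quadratic part of $\Phi$ has a definite sign: for $u \in E_-$ with $u \ne 0$,
\[
\norm{u}^p - \lambda\, \norm[p,f]{u}^p \le \left(1 - \frac{\lambda}{\lambda_k}\right) \norm{u}^p,
\]
whereas for $u \in E_+$,
\[
\norm{u}^p - \lambda\, \norm[p,f]{u}^p \ge \left(1 - \frac{\lambda}{\lambda_{k+1}}\right) \norm{u}^p.
\]
In case \ref{Lemma 3.2.i}, both coefficients are strictly signed ($<0$ on $E_-$ and $>0$ on $E_+$), so combining with the asymptotic expansion \eqref{3.1} the $\o(\norm{u}^p)$ remainder is absorbed for $\norm{u}$ small, yielding \eqref{2.4} on $B_\rho \cap E_\pm$ for some $\rho>0$.

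In case \ref{Lemma 3.2.ii}, $\lambda = \lambda_k$: the sign hypotheses $G \ge 0$, $H \ge 0$ give directly
\[
\Phi(u) \le \frac{1}{p}\left(\norm{u}^p - \lambda_k\, \norm[p,f]{u}^p\right) \le 0 \quad \text{for all } u \in E_-,
\]
without any smallness; on $E_+$ we use \eqref{3.1} together with the strict estimate $\norm{u}^p - \lambda_k\, \norm[p,f]{u}^p \ge (1 - \lambda_k/\lambda_{k+1}) \norm{u}^p$ to absorb the remainder and conclude $\Phi \ge 0$ on $B_\rho \cap E_+$. Case \ref{Lemma 3.2.iii} is symmetric: with $\lambda = \lambda_{k+1}$, $G \le 0$ and $H \le 0$ give $\Phi(u) \ge \frac{1}{p}(\norm{u}^p - \lambda_{k+1}\, \norm[p,f]{u}^p) \ge 0$ for all $u \in E_+$, while on $E_-$ we combine \eqref{3.1} with the strict estimate $\norm{u}^p - \lambda_{k+1}\, \norm[p,f]{u}^p \le (1 - \lambda_{k+1}/\lambda_k)\norm{u}^p < 0$ to get $\Phi \le 0$ on $B_\rho \cap E_-$. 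Once the cohomological local splitting is established, Proposition~\ref{Proposition 2.4} yields $C^k(\Phi,0) \ne 0$.

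I expect the main obstacle to be purely bookkeeping: identifying the correct cones and matching the degenerate cases \ref{Lemma 3.2.ii}--\ref{Lemma 3.2.iii} (where the quadratic part vanishes on one of the cones) with the appropriate sign assumption on $G$ and $H$. The nontrivial input from \eqref{2.1} is what makes the cohomological index in dimension $k$ appear, and \eqref{3.1} is what lets us control the nonlinear perturbation near $0$; everything else is an elementary comparison.
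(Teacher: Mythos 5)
Your proposal is correct and follows essentially the same route as the paper: the same cones $E_\pm$ defined by the eigenvalue inequalities, the index identity \eqref{2.1} for \eqref{2.3}, the expansion \eqref{3.1} to absorb the remainder in the nondegenerate cases, and the sign conditions on $G$ and $H$ to handle the cone on which the principal part vanishes, followed by Proposition~\ref{Proposition 2.4}.
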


\begin{proof}
We show that $\Phi$ has a cohomological local splitting near $0$ in dimension $k$ and apply Proposition \ref{Proposition 2.4}. Let
\[
E_- = \bgset{u \in E : \norm{u}^p \le \lambda_k \norm[p,f]{u}^p}, \qquad E_+ = \bgset{u \in E : \norm{u}^p \ge \lambda_{k+1} \norm[p,f]{u}^p}.
\]
Then \eqref{2.3} holds by \eqref{2.1}, so it only remains to show that \eqref{2.4} holds for sufficiently small $\rho > 0$.

\ref{Lemma 3.2.i} For sufficiently small $\rho > 0$,
\[
\Phi(u) \le - \left(\frac{\lambda}{\lambda_k} - 1 + \o(1)\right) \frac{\norm{u}^p}{p} \le 0 \quad \text{for all }u \in B_\rho \cap E_-
\]
and
\[
\Phi(u) \ge \left(1 - \frac{\lambda}{\lambda_{k+1}} + \o(1)\right) \frac{\norm{u}^p}{p} \ge 0 \quad \text{for all }u \in B_\rho \cap E_+
\]
by \eqref{3.1}.

\ref{Lemma 3.2.ii} For all $u \in E_-$,
\[
\Phi(u) \le - \int_\Omega G(x,u)\, dx - \int_{\bdry{\Omega}} H(x,u)\, dS \le 0,
\]
and for sufficiently small $\rho > 0$, $\Phi(u) \ge 0$ for all $u \in B_\rho \cap E_+$ as in \ref{Lemma 3.2.i}.

\ref{Lemma 3.2.iii} For sufficiently small $\rho > 0$ we have $\Phi(u) \le 0$ for all $u \in B_\rho \cap E_-$, as in \ref{Lemma 3.2.i}. On the other hand, for all $u \in E_+$,
\[
\Phi(u) \ge - \int_\Omega G(x,u)\, dx - \int_{\bdry{\Omega}} H(x,u)\, dS \ge 0. \QED
\]
\end{proof}

\begin{lemma} \label{Lemma 3.3}
There exists $a < 0$ such that $\mathcal H^\kappa(E,\Phi^a) = 0$ for all $\kappa\ge0$, where
$$\Phi^a = \bgset{u \in E : \Phi(u) \le a}.$$
\end{lemma}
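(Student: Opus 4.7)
My plan is to show that for some $a < 0$ sufficiently negative the inclusion $\Phi^a \hookrightarrow E$ is a homotopy equivalence; since $E$ is contractible, the long exact cohomology sequence of the pair $(E,\Phi^a)$ then forces $\mathcal{H}^\kappa(E,\Phi^a) = 0$ for every $\kappa \ge 0$. The strategy is to exploit the Ambrosetti--Rabinowitz type hypotheses $(g_3)$ and $(h_3)$, together with the super-$p$ growth built into $(g_2)$ and $(h_2)$, to build a radial deformation retraction of $E$ onto $\Phi^a$.

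The first step is anti-coercivity of $\Phi$ along every ray. Using the standard trick of integrating the differential inequality obtained by applying $(g_3)$ pointwise to $t \mapsto G(x,ts)/t^\mu$, one derives the classical lower bound
\[
G(x,s) \ge C_1(x)\,|s|^\mu - C_2(x)
\]
for a.a.\ $x \in \Omega$ and all $s \in \R$, where the uniform second limit in $(g_2)$ ensures $C_1 > 0$ a.e.\ and the hypotheses on $\gamma_0,\gamma_1,g_0,g_1$ give $C_2 \in L^1(\Omega)$; an analogous bound on $\bdry{\Omega}$ follows from $(h_3)$ and $(h_2)$. Hence, for every $u \in E \setminus \{0\}$ and every $T \ge 1$,
\[
\Phi(T u) \le \frac{T^p}{p}\,\bigl(\norm{u}^p + |\lambda|\,\norm[p,f]{u}^p\bigr) - c\,T^\mu\,\int_\Omega C_1(x)\,|u(x)|^\mu\,dx + C_3,
\]
with constants $c > 0$ and $C_3 \ge 0$ independent of $T$. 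Since $\mu > p$ and $\int_\Omega C_1\,|u|^\mu > 0$, it follows that $\Phi(Tu) \to -\infty$ as $T \to +\infty$, with convergence uniform on compact subsets of $E \setminus \{0\}$.

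Now fix $a < 0$ so negative that $\Phi$ takes values below $a$, and for each $u \in E \setminus \{0\}$ set
\[
T(u) = \inf\,\bgset{t \ge 1 : \Phi(s\,u) \le a \text{ for all } s \ge t},
\]
which is finite by the previous step, upper semicontinuous in $u$, and locally bounded on $E \setminus \{0\}$; a standard smoothing produces a continuous majorant $\widetilde T \ge T$ equal to $1$ on a neighborhood of $\Phi^a \setminus \{0\}$. The map
\[
\eta : [0,1] \times E \to E,\qquad \eta(s,u) = \bigl(1 + s\bigl(\widetilde T(u) - 1\bigr)\bigr)\,u \quad (u \ne 0),
\]
extended continuously through $0$ by collapsing a small ball around the origin onto a fixed element of $\Phi^a$ (possible because $E$ is contractible), defines a strong deformation retraction of $E$ onto $\Phi^a$. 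The main technical obstacle is the continuity of $T$: the AR conditions yield eventual decay of $t \mapsto \Phi(tu)$ but not monotonicity, so multiple crossings of the level $a$ are in principle possible; I would handle this either by the upper-semicontinuity plus smoothing above, or by replacing the radial homotopy with a truncated negative pseudo-gradient flow stopped at the first descent below $a$.
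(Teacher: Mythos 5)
Your proposal follows the paper's overall template (anti-coercivity along rays, then a radial deformation onto $\Phi^a$), but it omits the one ingredient that makes the radial retraction work. The paper spends most of its proof establishing, via $(g_3)$, $(h_3)$ and a contradiction/Fatou argument, that
\[
\limsup_{\norm{u}\to\infty,\ \Phi(u)\le c}\ \frac{\dualp{\Phi'(u)}{u}-p\,\Phi(u)}{\norm{u}^p}<0,
\]
whence there is $a<0$ with $\dualp{\Phi'(u)}{u}<0$ for all $u\in\Phi^a$. This is precisely what guarantees that $t\mapsto\Phi(tu)$ is strictly decreasing once it reaches the level $a$, hence that $\Phi^a$ is radially absorbing ($u\in\Phi^a$ implies $tu\in\Phi^a$ for $t\ge1$), that $T\equiv1$ on $\Phi^a$, and that $T$ is continuous. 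You flag the continuity of $T$ as ``the main technical obstacle,'' but neither of your workarounds closes it: a continuous majorant $\widetilde T\ge T$ with $\widetilde T\equiv1$ on a neighborhood of $\Phi^a\setminus\set{0}$ cannot exist if $T>1$ somewhere on $\Phi^a$, which — absent the inequality above — can happen, since the ray through a point of $\Phi^a$ may re-enter $\set{\Phi>a}$; and a pseudo-gradient flow stopped at level $a$ needs a Palais--Smale condition and the absence of critical values in $(a,0]$, neither of which is available at the level of this lemma ($0$ is a critical point with $\Phi(0)=0>a$, so the flow started near $0$ never reaches $\Phi^a$).

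Two further gaps. First, the extension of the homotopy ``through $0$ by collapsing a small ball onto a fixed element of $\Phi^a$'' cannot be made continuous: $T(u)\to\infty$ as $u\to0$ (since $\Phi>a$ on a ball about the origin), so $\widetilde T(u)\,u$ has no limit at $0$, and a map collapsing a ball to a point while fixing $\Phi^a$ is discontinuous across the ball's boundary. The paper avoids this entirely by retracting only $E\setminus\set{0}$ onto $\Phi^a$ and concluding $\mathcal H^\kappa(E,\Phi^a)\isom\mathcal H^\kappa(E,E\setminus\set{0})=0$, which holds because $E$ is infinite dimensional. Second, the pointwise bound $G(x,s)\ge C_1(x)|s|^\mu-C_2(x)$ with $C_1>0$ a.e.\ does not follow from the hypotheses: $(g_3)$ is a \emph{perturbed} AR condition, and the integrated inequality reads $G(x,Ts)\ge T^\mu\bigl(G(x,s)-\gamma_0(x)/\mu-\gamma_1(x)|s|^p/(\mu-p)\bigr)$; since no lower bound on $g_0$ relative to $\gamma_1$ is assumed, the coefficient in parentheses can be negative on a set of positive measure no matter where you start the integration, and the uniform limit in $(g_2)$ controls $G/(g_0|s|^p)$, not $G/(\gamma_1|s|^p)$. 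The paper instead obtains $\Phi(tu)\to-\infty$ along rays directly from $(g_2)$, $(h_2)$ and Fatou's lemma, and that is the route you should take for this step as well.
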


\begin{proof} Let us first note that by $(g1)$-$(g2)$ and $(h1)$-$(h2)$ there exists a constant $C>0$ such that
\begin{equation}\label{GH}\begin{gathered}|G(x,s)|\le C w_2(x)|s|^p \quad\text{ for a.a. $x\in\Omega$ and all }s\in\mathbb R,\\
|H(x,s)|\le C w_3(x)|s|^p\quad\text{ for a.a. $x\in\partial\Omega$ and all }s\in\mathbb R.\end{gathered}\end{equation}
Next we show that for all $u\in E$, with $u \ne 0$,
\begin{equation} \label{3.3}\Phi(tu) \to - \infty \as t \to \infty.
\end{equation}
Indeed, fix $u\in E$, with $u \ne 0$. Hence, either $|u|>0$ in a subset $A$ of $\Omega$,
 with meas$(A)>0$, or $|u|>0$ in a subset $\Gamma$ of $\partial\Omega$, with meas$_{N-1}(\Gamma)>0$, being $w>0$ a.e. in $\Omega$ and $\tilde w>0$ a.e. in $\partial\Omega$.\smallskip

\noindent{\it Case $|u|>0$ in $A$, {\rm meas}$(A)>0$.} By $(g2)$ as $|t|\to\infty$
$$\frac{G(x,tu(x))}{|t|^p}= \frac{G(x,tu(x))}{g_0(x)|t|^p|u(x)|^p}\,g_0(x)|u(x)|^p\to\infty\quad\mbox{ a.e. in }A,$$
and $|t|^{-p}|G(x,tu(x))|\le C w_2(x)|u|^p\in L^1(\Omega)$ by \eqref{GH}.
Hence $\lim_{|t|\to\infty}|t|^{-p}\int_\Omega G(x,tu(x))\,dx=\infty$ by the Fatou lemma.\smallskip

\noindent{\it Case $|u|>0$ in $\Gamma$, {\rm
meas}$_{N-1}(\Gamma)>0$.} Similarly, by $(h2)$ as $|t|\to\infty$
$$\frac{H(x,tu(x))}{|t|^p}= \frac{H(x,tu(x))}{h_0(x)|t|^p|u(x)|^p}\,h_0(x)|u(x)|^p\to\infty\quad\mbox{ a.e. in }\Gamma,$$
and $|t|^{-p}|H(x,tu(x))|\le C w_3(x)|u|^p\in L^1(\partial\Omega)$ by \eqref{GH}. Thus
$\lim_{|t|\to\infty}|t|^{-p}\int_{\partial\Omega} H(x,tu(x))\,dS=\infty$ again by the Fatou lemma.

In conclusion we have
$$\frac{\Phi(tu)}{|t|^p}=\frac1p\left(\norm{u}^p - \lambda\norm[p,f]{u}^p\right)-\int_\Omega \frac{G(x,tu(x))}{|t|^{p}}\,dx-\int_{\partial\Omega} \frac{H(x,tu(x))}{|t|^{p}}\,dS\to-\infty$$
as $|t|\to\infty$, along any $u\in E$, with $u \ne 0$. This shows the claim \eqref{3.3}.

Next we prove that there exists $a < 0$ such that
\begin{equation} \label{3.4}
\dualp{\Phi'(u)}{u} < 0 \quad \text{for all }u \in \Phi^a.
\end{equation}
Let us first claim that for all $c\in\mathbb R$
\begin{equation} \label{3.5}
\limsup_{\norm{u}\to\infty,\,\,\Phi(u)\le c}\frac{\dualp{\Phi'(u)}{u}-p\Phi(u)}{\norm{u}^p}<0.$$
\end{equation}
Fix $c\in\mathbb R$. As in Lemma~3.1 of~\cite{MR2661274} we assume by contradiction that there exist sequences $(d_n)_n\subset\mathbb R$ and $(u_n)_n\subset \Phi^c$
such that $d_n\to0$ and
$$\dualp{\Phi'(u_n)}{u_n}-p\Phi(u_n)\ge-d_n\norm{u_n}^p \quad\text{for all }n\in\mathbb N.$$
Clearly $v_n=u_n/\|u_n\|$ is in the unit sphere of
$E$ for all $n$ sufficiently large. Therefore, up to a
subsequence, still denoted for simplicity by $(v_n)_n$, there is
$v\in E$ such that $v_n\rightharpoonup v$ in $E$, $v_n\to v$ in
$L^p(\Omega,w)$, $v_n\to v$ in $L^p(\partial\Omega,\tilde w)$,
$v_n\to v$ a.e. in $\Omega$ and $v_n\to v$ a.e. in
$\partial\Omega$ and $|v_n|\le\phi$ a.e. in $\Omega$ for all $n$, with $\phi\in L^p(\Omega,w_1)\cap L^p(\partial\Omega,w_3)$.

By $(g3)$ and $(h3)$ we have
$$\begin{aligned}-d_n\norm{u_n}^p&\le\dualp{\Phi'(u_n)}{u_n}-p\Phi(u_n)\\
&=\int_\Omega\big[\mu G(x,u_n)-u_ng(x,u_n)\big]dx+\int_{\partial\Omega}\big[\tilde\mu H(x,u_n)-u_nh(x,u_n)\big]dS\\
&\qquad-(\mu-p)\int_\Omega G(x,u_n)dx-(\tilde\mu-p)\int_{\partial\Omega} H(x,u_n)dS\\
&\le\int_\Omega\big(\gamma_0+\gamma_1|u_n|^p\big)dx+\int_{\partial\Omega}\big(\tau_0+\tau_1|u_n|^p\big)dS\\
&\qquad-(\mu-p)\int_\Omega G(x,u_n)dx-(\tilde\mu-p)\int_{\partial\Omega} H(x,u_n)dS.
\end{aligned}$$
Hence
\begin{equation} \label{3.6}
\limsup_n\dfrac{(\mu-p)\displaystyle\int_\Omega G(x,u_n)dx+(\tilde\mu-p)\displaystyle\int_{\partial\Omega} H(x,u_n)dS}{\norm{u_n}^p}<\infty.
\end{equation}
Moreover, by \eqref{GH}
\begin{equation} \label{L}\dfrac{|G(x,u_n)|}{\norm{u_n}^p}=\dfrac{|G(x,\norm{u_n}v_n)|}{\norm{u_n}^p}\le C w_2|v_n|^p\le C w_2\phi,\quad\dfrac{|H(x,u_n)|}{\norm{u_n}^p}\le C w_3 \phi\end{equation}
a.e. in $\Omega$ and a.e. on $\partial\Omega$, respectively.
Hence by the Fatou lemma and \eqref{3.6}
$$\begin{aligned}-\infty<(\mu-p)&\int_\Omega\liminf_n\dfrac{G(x,u_n)}{\norm{u_n}^p}\,dx+(\tilde\mu-p)\int_{\partial\Omega}\liminf_n\dfrac{H(x,u_n)}{\norm{u_n}^p}
\,dS\\
&\qquad\le\liminf_n\dfrac{(\mu-p)\displaystyle\int_\Omega G(x,u_n)dx+(\tilde\mu-p)\displaystyle\int_{\partial\Omega}H(x,u_n)dS}{\norm{u_n}^p}
<\infty.\end{aligned}$$
On the other hand, as shown above, by $(g2)$ at any point $x\in\Omega$ at which $v(x)\not=0$
$$\lim_n\frac{G(x,u_n(x))}{\norm{u_n}^p}=\lim_n\frac{G(x,\norm{u_n}v_n(x))}{g_0(x)\norm{u_n}^pv_n(x)}\,g_0(x)v_n(x)=\infty,$$
and similarly by $(h2)$ at any point $x\in\partial\Omega$ at which $v(x)\not=0$
$$\lim_n\frac{H(x,u_n(x))}{\norm{u_n}^p}=\lim_n\frac{H(x,\norm{u_n}v_n(x))}{h_0(x)\norm{u_n}^pv_n(x)}\,h_0(x)v_n(x)=\infty.$$
In conclusion the limit $v$ is trivial in $E$, that is $v=0$ a.e. in $\Omega$ and on $\partial\Omega$ in the sense of the $N-1$ measure on $\partial\Omega$.
Hence by \eqref{L}
\begin{equation}\label{3.7}\lim_n\dfrac{\displaystyle\int_\Omega G(x,u_n)dx+\displaystyle\int_{\partial\Omega}H(x,u_n)dS}{\norm{u_n}^p}=0.\end{equation}
Now, passing to the limit as $n\to\infty$ in both sides of the following inequality
$$\frac1p-\lambda\|v_n\|^p_{p,f}-\dfrac{\displaystyle\int_\Omega G(x,u_n)dx+\displaystyle\int_{\partial\Omega}H(x,u_n)dS}{\norm{u_n}^p}=\dfrac{\Phi(u_n)}{\norm{u_n}^p}\le\frac c{\norm{u_n}^p},$$
we get $1/p\le0$ by \eqref{3.7}. This contradiction completes the proof of the claim~\eqref{3.5}.

Now by \eqref{3.5}, with $c=0$, there exists $b\in\mathbb R$ such that $\dualp{\Phi'(u)}{u}-p\Phi(u)\le b$ for all $u\in \Phi^0$. In particular, there exists $a<0$ such that~\eqref{3.4} holds.

Fix $u\in E$, with $u\ne 0$. Therefore $\Phi(tu) \le a$ for all sufficiently large $t$ by \eqref{3.3}. Set
\[
T(u) = \inf\, \bgset{t \ge 1 : \Phi(tu) \le a}.
\]
Since
\[
\Phi(tu) \le a \implies \frac{d}{dt}\, \big(\Phi(tu)\big) = \dualp{\Phi'(tu)}{u} = \frac{1}{t} \dualp{\Phi'(tu)}{tu} < 0
\]
by \eqref{3.4}, then
\[
\Phi^a = \bgset{tu : u \in E \setminus \set{0},\, t \ge T(u)}
\]
and the map $E \setminus \set{0} \to [1,\infty),\, u \mapsto T(u)$, is continuous. Thus $E \setminus \set{0}$ radially deformation retracts to $\Phi^a$ via
\[
(E \setminus \set{0}) \times [0,1] \to E \setminus \set{0}, \quad (u,t) \mapsto (1 - t)\, u + t\, T(u)\, u,
\]
and hence
\[
\mathcal H^\kappa(E,\Phi^a) \isom \mathcal H^\kappa(E,E \setminus \set{0}) = 0 \quad \text{for all }\kappa\ge0. \QED
\]
\end{proof}

We are now ready to prove our main result.

\begin{proof}[Proof of Theorem~$\ref{Theorem 1.2}$]
Suppose $0$ is the only critical point of $\Phi$. Since $\Phi$ satisfies the \PS{} condition by Autuori et al.\! \cite[Lemma 4.2]{AuPuVa}, then $\Phi^0$ and $\Phi^a$ are deformation retracts of $E$ and $\Phi^0 \setminus \set{0}$, respectively, by the second deformation lemma. Thus, taking $U = E$ in \eqref{2.2} gives
\[
C^\kappa(\Phi,0) = H^\kappa(\Phi^0,\Phi^0 \setminus \set{0}) \isom \mathcal H^\kappa(E,\Phi^a) = 0 \quad \mbox{for all }\kappa \ge 0
\]
by Lemma \ref{Lemma 3.3}. However, $C^\kappa(\Phi,0) \ne 0$ for some $\kappa \ge 0$ by Lemmas~\ref{Lemma 3.1} and~\ref{Lemma 3.2}. This contradiction completes the proof.
\end{proof}

\def\cdprime{$''$}

\end{document}